\newtheorem{definition}{Definition}
\newtheorem{lemma}{Lemma}
\newtheorem{expl}{Example}
\newtheorem{rmk}{Remark}
\newtheorem{theorem}{Theorem}
\newtheorem{corollary}{Corollary}
\newtheorem{q}{Question}
\newtheorem*{thm*}{Theorem}
\newcommand{\Spec}{\operatorname{Spec}}
\begin{document}

\title{Length of local cohomology in positive characteristic and ordinarity}

\author{Thomas Bitoun}

\thanks{Mathematical Institute, University of Oxford, Oxford OX2 6GG, UK; tbitoun@gmail.com}


\begin{abstract} 
Let $D$ be the ring of Grothendieck differential operators of the ring $R$ of polynomials in $d\geq3$ variables with coefficients in a perfect field of characteristic $p.$ We compute the $D$-module length of the first local cohomology module $H^1_f(R)$ with respect to a polynomial $f$ with an isolated singularity, for $p$ large enough. The expression we give is in terms of the Frobenius action on the top coherent cohomology of the exceptional fibre of a resolution of the singularity. Our proof rests on a tight closure computation of Hara. Since the above length is quite different from that of the corresponding local cohomology module in characteristic zero, we also consider a characteristic zero $D$-module whose length is expected to equal that above, for ordinary primes.
\end{abstract}

\maketitle

\section{Introduction}

In this note, we compute the positive characteristic $D$-module length of the first local cohomology module of the structure sheaf with support in a hypersurface, in a large class of examples. 
Our main result can also be seen as part of our study of the $b$-function in positive characteristic, see \cite{bfunp}. On the one hand, in \cite{bfunp} using $D$-module (or unit $F$-module) techniques, for $D$ the ring of Grothendieck differential operators, we associate to a non-constant polynomial $f$ with coefficients in a perfect field of positive characteristic a set of $p$-adic integers, called the roots of the $b$-function of $f.$ On the other, one may consider the $F$-jumping exponents of the generalised test ideals of $f,$ see \cite{MR1974679}. These are positive real numbers which are characterised by their intersection with the unit interval $(0,1]$ and have been shown to be rational numbers in \cite{MR2538604}. In \cite{bfunp} we prove that the roots of the $b$-function of $f$ are exactly the opposites of the $F$-jumping exponents of $f$ which are in $\mathbb{Q}\cap \mathbb{Z}_p.$ It would thus seem that the information provided by the $F$-jumping exponents of $f$ which are not in $\mathbb{Q}\cap \mathbb{Z}_p,$ i.e. whose denominator is divisible by $p,$ let us call them irregular, is lost in the theory. 
A consequence of the results presented here is that not all the information is lost. Namely the absence of irregular $F$-jumping exponents is well-known to be closely related to phenomena of ordinarity, see \cite{MR2863367}. We claim that at the very least the $D$-module (or unit $F$-module) length of the module $N_f$ used to define the $b$-function in \cite{bfunp} distinguishes ordinary primes from supersingular ones, for large enough primes. More precisely, using the terminology of \cite{bfunp}, one can see that the joint eigenspace of the action of the higher Euler operators on $N_f$ corresponding to the root {-1} of the $b$-function of $f$ is isomorphic to the first local cohomology module $H^1_f(R)= \frac{R[\frac{1}{f}]}{R},$ where $R$ is the ring of polynomials. Since this assertion does not appear in the literature, let us give a proof.  
By \cite[Proposition 1]{bfunp},  
this joint eigenspace is the limit of the direct system: $\frac{D^{(0)}f^{p-1}}{D^{(0)}f^p} \xrightarrow{.f^{p^2-p}} \frac{D^{(1)}f^{p^2-1}}{D^{(1)}f^{p^2}} \xrightarrow{.f^{p^3-p^2}} \cdots \xrightarrow{.f^{p^{e+1}-p^{e}}} \frac{D^{(e)}f^{p^{e+1}-1}}{D^{(e)}f^{p^{e+1}}} \xrightarrow{.f^{p^{e+2}-p^{e+1}}} \frac{D^{(e+1)}f^{p^{e+2}-1}}{D^{(e+1)}f^{p^{e+2}}} \xrightarrow{.f^{p^{e+3}-p^{e+2}}}\cdots,$ where $D^{(e)}$ is the ring of Grothendieck differential operators of level $e$ on $\Spec(R).$
Since $D^{(e)}f^{p^{e+1}-1} = R$ for every large enough natural number $e$ by \cite[Proof of Lemma 6.8]{MR2469353}, and the $D$-submodule $D\frac{1}{f}$ of $R[\frac{1}{f}]$ generated by $\frac{1}{f}$ is $R[\frac{1}{f}]$ itself by \cite[Theorem 1.1]{MR2155224}, the assertion immediately follows from the description of $D\frac{1}{f}$ given in \cite[Remark 2.7]{MR2538604}.

For a $d$-dimensional proper variety $Z$ over a field of characteristic $p>0,$ we let the $p$-genus $g_p(Z)$ of $Z$ be the dimension of the stable part \cite{MR0441962} of $\overline{k}\otimes H^d(Z, \mathcal{O}_{Z}),$ that is $\dim_{\overline{k}}(\cap_{l\geq0}F^l(\overline{k}\otimes H^d(Z, \mathcal{O}_{Z}))),$ where $F$ is the Frobenius action on coherent cohomology and $\overline{k}$ is an algebraic closure of the base field.
 Our main result is (see Theorem \ref{thm: main} for the precise general formulation): 

\begin{theorem}
Suppose that $f$ is an irreducible complex polynomial in $n\geq3$ variables with an isolated singularity at the origin and let $Y\xrightarrow{\pi} X$ be a resolution of the singularity. Then for almost all $p,$ the $D$-module length of $H^1_{f_p}(R)$ is $1 + g_p(Z_p),$ where $Z_p$ is the reduction modulo $p$ of the exceptional fibre of $\pi.$ \end{theorem}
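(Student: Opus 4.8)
\emph{Proof plan.} Write $M:=H^1_{f_p}(R)$, let $\mathfrak{m}\subset R$ be the maximal ideal of the origin, $E:=H^n_{\mathfrak{m}}(R)$ the unique simple $D$-module supported at the origin, and $U:=\Spec R\setminus\{0\}$. The plan is to split $M$ into a ``generic'' composition factor and a number of copies of $E$, and then to read off that number from the Frobenius action on the resolution by means of Hara's theorem.

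\emph{Step 1 (reduction to the multiplicity of $E$).} Because $f$ has an isolated singularity, $V(f)$ is a smooth divisor over $U$, so $M|_U$ is the local cohomology module of a smooth hypersurface; a direct computation with divided-power operators shows that $\mathcal{O}[1/g]/\mathcal{O}$, for $g$ a local equation of a smooth divisor, is $D$-generated by any nonzero element, so $M|_U$ is a simple $D_U$-module. Restriction to $U$ is exact and carries a simple $D$-module either to $0$ — exactly when it is supported at the origin, hence $\cong E$ — or to a simple $D_U$-module. Applying this to a composition series of $M$: exactly one factor survives restriction to $U$, namely the one restricting to $M|_U$, and all others are copies of $E$; thus
\[
\operatorname{length}_D M = 1 + [M : E].
\]

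\emph{Step 2 (the copies of $E$ sit on top).} Since the origin lies on $V(f)$ we have $f\in\mathfrak{m}$, so $\mathfrak{m}R[1/f]$ is the unit ideal and $H^i_{\mathfrak{m}}(R[1/f])=0$ for all $i$; feeding $0\to R\to R[1/f]\to M\to 0$ into $\mathbf{R}\Gamma_{\mathfrak{m}}$ yields $H^i_{\mathfrak{m}}(M)\cong H^{i+1}_{\mathfrak{m}}(R)$, so in particular $\Gamma_{\mathfrak{m}}(M)=0$. Hence $M$ has no nonzero submodule supported at the origin, so every nonzero $D$-submodule of $M$ restricts isomorphically onto a submodule of the simple module $M|_U$; two such submodules cannot meet in $0$, so $M$ has a \emph{unique} simple submodule, necessarily the intersection-cohomology $D$-module $L:=j_{!*}(M|_U)$ of $V(f)$. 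Then $(M/L)|_U=0$, so $M/L$ is supported at the origin, all of its composition factors are $\cong E$, and $[M:E]=\operatorname{length}_D(M/L)$; moreover, pushing $0\to L\to M\to M/L\to 0$ through $\mathbf{R}\Gamma_{\mathfrak{m}}$ and using $n\geq 3$ identifies $M/L$ with $H^1_{\mathfrak{m}}(L)$, so $[M:E]=\operatorname{length}_D H^1_{\mathfrak{m}}(j_{!*}(M|_U))$.

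\emph{Step 3 (identification with $g_p(Z_p)$), and the main obstacle.} This is the heart of the argument. Using the unit $F$-module presentation of $M$ underlying the direct system of the Introduction — concretely $M=\varinjlim\bigl(R/fR\xrightarrow{\,\cdot f^{p-1}\,}R/f^pR\xrightarrow{\,\cdot f^{p^2-p}\,}R/f^{p^2}R\to\cdots\bigr)$, whose $H^{n-1}_{\mathfrak{m}}$ recovers the filtration of $E$ by $f$-power torsion — one compares the $E$-multiplicity $[M:E]$ with the tight closure $0^{*}_{H^{n-1}_{\mathfrak{m}}(R/f_p R)}$ of zero in the top local cohomology of the (Gorenstein, isolated-singularity) hypersurface $R/f_pR$; this submodule has finite length because the singularity is isolated, and it is precisely this comparison — translating the $D$-module invariant $[M:E]=\operatorname{length}_D H^1_{\mathfrak{m}}(j_{!*}(M|_U))$ into the commutative-algebra invariant $\operatorname{length}\,0^{*}_{H^{n-1}_{\mathfrak{m}}(R/f_pR)}$ — that I expect to be the real difficulty, Steps 1–2 being formal. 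One then invokes Hara's geometric description of $0^{*}_{H^{n-1}_{\mathfrak{m}}(R/f_pR)}$ via the resolution $\pi\colon Y\to X$ (the kernel of $H^{n-1}_{\mathfrak{m}}(R/f_pR)\to H^{n-1}_{Z_p}(\mathcal{O}_{Y_p})$, dually the failure of the trace $\pi_*\omega_{Y_p}\to\omega_{R/f_pR}$ to be onto), which equates its length, for $p$ large, with $\dim_{\overline k}\bigcap_{l\geq 0}F^l H^{\dim Z}(Z_p,\mathcal{O}_{Z_p})=g_p(Z_p)$. Finally, to pass from ``$p$ large'' to ``almost all $p$'' one spreads $f$ and the resolution out over a finitely generated $\mathbb{Z}$-subalgebra, so that a cofinite set of primes simultaneously keeps the reduction of $\pi$ a resolution (with $Y_p$ smooth), makes $D^{(e)}f_p^{p^{e+1}-1}=R$ hold for $e\gg 0$, and lands in the range of validity of Hara's theorem.
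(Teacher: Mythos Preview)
Your Steps 1 and 2 recover the short exact sequence $0\to\mathcal{L}\to H^1_{f_p}(R)\to\mathcal{K}\to 0$ that opens the paper's proof, with $\mathcal{L}$ Blickle's intersection homology module and $\mathcal{K}$ supported at the origin; the paper simply quotes Blickle for the existence and simplicity of $\mathcal{L}$ rather than rederiving them, but the structure is the same.

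The genuine gap is in Step 3, and it is not only that you have left the translation unspecified: the two claims you do make there are not correct as stated. First, the commutative-algebra invariant matching $[M:E]$ is \emph{not} the $A$-module length of $0^*:=0^*_{H^{n-1}_{\mathfrak m}(A)}$. Second, Hara's theorem does not ``equate its length with $g_p(Z_p)$'': it gives an isomorphism of $A[F]$-modules $0^*\cong H^{n-2}(Y_p,\mathcal O_{Y_p})$ (with $Y_p$ the reduction of the resolution), and the $k$-dimension of the right-hand side typically exceeds $g_p(Z_p)$ --- the discrepancy is exactly the Frobenius-nilpotent part, which is nonzero e.g.\ at a supersingular prime in the elliptic-curve example, where $g_p(Z_p)=0$ but $0^*\neq 0$.

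The paper bridges this with two ingredients you have not mentioned. The first is Lyubeznik's notion of \emph{quasilength} for Artinian $A[F]$-modules (counting composition factors modulo those on which $F$ is nilpotent), together with his enhancement of Matlis duality: after completion this sends $H^{n-1}_{\widehat{\mathfrak m}}(\widehat A)$ to $H^1_{f_p}(\widehat R)$ and $H^{n-1}_{\widehat{\mathfrak m}}(\widehat A)/0^*$ to $\widehat{\mathcal L}$, and it exchanges quasilength with \emph{unit $F$-module} length. Thus $\lg_{uF}(\mathcal K)=\mathrm{ql}(0^*)$, and Hara together with two short lemmas (passing to the exceptional fibre, then identifying quasilength over $\overline k$ with the dimension of the Frobenius-stable part) gives $\mathrm{ql}(0^*)=g_p(Z_p)$. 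The second ingredient is Blickle's theorem that unit $F$-module length and $D$-module length agree over an algebraically closed field; one base-changes to $\overline k$, applies this, and then descends via the compatibility of Kashiwara's equivalence with field extension. Without the intermediate invariant ``quasilength'' and the detour through unit $F$-module length, the comparison you flag as the real difficulty has no mechanism behind it.
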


The proof, which mostly belongs to the theory of unit $F$-modules, uses Blickle's intersection homology $D$-module \cite{MR2036330} and Lyubeznik's enhancement of Matlis duality \cite{MR1476089} to reduce the main unit $F$-module length computation to a geometric description of the tight closure of $0$ in the local cohomology of the singularity, due to Hara \cite{MR1646049}. One then deduces the $D$-module length from Blickle's length comparison result \cite{MR1946409} and an application of Haastert's positive characteristic Kashiwara's equivalence \cite{MR966631}.  

We note that in characteristic zero, the $D$-module length of the first local cohomology module is of a quite different nature. It actually is a topological invariant. For example, let $f$ be a rational cubic in three variables which is the equation of an elliptic curve $E$ in $\mathbb{P}^2_{\mathbb{Q}},$ of genus $g=1.$ Let $R_{\mathbb{C}}$ be the ring of complex polynomials in $3$ variables and for all primes $p,$ let $R_p$ be the ring of $\mathbb{F}_p$-polynomials in $3$ variables. Then the $D_{R_{\mathbb{C}}}$-module length of the local cohomology module $H^1_f(R_{\mathbb{C}})$ is $3= 1+2g,$ see e.g. \cite[Remark 1.2]{bitoun2016d}. But as will be seen in Example \ref{expl: elliptic curve}, for almost all primes $p,$ the $D_p$-module length of $H^1_{f_p}(R_p)$ is $2= 1+g$ if $E_p$ is ordinary and $1$ if $E_p$ is supersingular, where $E_p$ (resp. $f_p$) is the reduction of $E$ (resp. $f$) modulo $p$ and $D_p= D_{R_{\mathbb{F}_p}}$ is the ring of Grothendieck differential operators on $\mathbb{A}^{n}_{\mathbb{F}_p}.$ Thus for (almost) all primes $p,$ the lengths of the first local cohomology modules in characteristic zero and in characteristic $p$ are different.  
We end this note with a section on comparison with characteristic zero, arguing that in great generality, the $D_{R_{\mathbb{C}}}$-submodule $D_{R_{\mathbb{C}}}\frac{1}{f}$ of the local cohomology module $H^1_f(R_{\mathbb{C}})$ generated by the class of $\frac{1}{f}$ (which need not be equal to $H^1_f(R_{\mathbb{C}})$) is a better behaved characteristic zero analogue of $H^1_{f_p}(R_p)$ than the whole local cohomology module $H^1_f(R_{\mathbb{C}}).$ (Recall that the left $D_p$-module $H^1_{f_p}(R_p)$ is generated by the class of $\frac{1}{f_p},$ by \cite[Theorem 1.1]{MR2155224}.)
For example, in the case of the elliptic curve above, we have that the $D_{R_{\mathbb{C}}}$-module length of $D_{R_{\mathbb{C}}}\frac{1}{f}$ is equal to the $D_p$-module length of $H^1_{f_p}(R_p),$ for almost all ordinary primes $p$ of $E.$ The $D_{R_{\mathbb{C}}}$-module $D_{R_{\mathbb{C}}}\frac{1}{f}$ is studied in detail in \cite{bitoun2016d}. (See also \cite{saito2015d} for a different approach.) 

\subsection{Notation}
Throughout the note we will use the following notation: For an integer $n\geq2$ and all fields $K,$ we let $R_K$ be the ring of polynomials in $n+1$ variables $\{x_0, \dots, x_n\}$ with coefficients in $K$ and $D_K = D_{R_K}$ be the ring of Grothendieck differential operators on $\mathbb{A}^{n+1}_K.$ 

Let $k$ be a perfect field of positive characteristic $p,$ we set $D = D_{R_k}.$ If $A$ is a $k$-algebra, we denote by $A[F]$ the twisted polynomial ring over $A$ whose multiplication is defined by $Fa=a^pF,$ for all $a$ in $A.$  If $M$ is a left $A[F]$-module, we denote by 
$F^*M \xrightarrow{\beta_M} M$ the $A$-linear morphism induced by the action of $F$ on $M,$ where $F^*$ is the functor from the category of $A$-modules to itself, given by the extension of scalars by the Frobenius endomorphism of $A.$

\section{Length of the First Local Cohomology in Positive Characteristic} 
We first recall some definitions. 

\begin{definition} Let $f\in R_k$ be a non-constant polynomial in $n+1$ variables. The first local cohomology module $H_f^1(R_k)$ of $R_k$ with respect to $f$ is the left $D$-module cokernel of the natural inclusion $R_k\subset R_k[\frac{1}{f}].$ \end{definition}

\begin{rmk} The Frobenius endomorphism of $R_k$ induces a finitely generated unit $F$-module structure on $H_f^1(R_k).$ The associated action of $D$ is the natural one. Hence 
it follows immediately from \cite[Theorem 3.2]{MR1476089} that $H_f^1(R_k)$ is of finite length as a unit $F$-module. It is thus of finite length as a left $D$-module by \cite[Theorem 5.7]{MR1476089}.
\end{rmk}

The purpose of this note is to give an expression for the length of $H_f^1(R_k),$ when $f$ has an isolated singularity. It will be in terms of the quasilength of a certain $F$-module. We now recall the definitions from \cite[Section 4]{MR1476089}. 

\begin{definition} \label{defi: *}
Let $A$ be a local Noetherian $k$-algebra with Frobenius endomorphism $F$ and let $M$ be a left $A[F]$-module. 
\begin{itemize}
\item $M^\ast:= \cap_{n\geq0} F^nM,$ where $F^nM$ is the $A$-submodule of $M$ generated by the image $F^n(M)$
\item $M_{nil}:= \cup_{n\geq0} \ker \{M \xrightarrow{F^n} M\}$
\end{itemize}
\end{definition}

\begin{definition} Let $A$ be a local Noetherian $k$-algebra of Frobenius endomorphism $F$ and let $M$ be a left $A[F]$-module. Suppose that $M$ is Artinian as an $A$-module.
\begin{itemize} \item A finite chain of length $s$ of $A[F]$-submodules $0= M_0 \subset \dots \subset M_s=M$ is quasimaximal if $(\frac{M_i/M_{i-1}}{(M_i/M_{i-1})_{nil}})^\ast$ is a simple left $A[F]$-module, for all $i \in \{1, \dots, s\}.$
\item If $M_{nil} \subsetneq M,$ then $M$ has a quasimaximal chain of submodules and all such chains are of the same length, called the quasilength $\mathrm{ql}(M)$ of $M.$ If $M= M_{nil},$ then we set $\mathrm{ql}(M)=0.$ See \cite[Theorems 4.5 and 4.6]{MR1476089}.
\end{itemize}
\end{definition}

Finally, we recall the definition of the Lyubeznik-Matlis duality.

\begin{definition} \label{defi:LM duality} Let $\widehat{R}_0$ be a complete local regular Noetherian $k$-algebra and let $M$ be an Artinian $\widehat{R}_0$-module.
\begin{itemize}
\item  
The Matlis duality functor is the contravariant functor $(-)^\vee:= Hom_{\widehat{R}_0}(-, E)$ from the category of $\widehat{R}_0$-modules to itself, 
where $E$ is an injective hull of the residue field of $\widehat{R}_0$ in the category of $\widehat{R}_0$-modules. Moreover, the Matlis dual $M^\vee$ of an Artinian module $M$ is a finitely generated $\widehat{R}_0$-module.

\item Suppose further that $M$ is a left $\widehat{R}_0[F]$-module. The Lyubeznik-Matlis dual $\mathcal{D}(M)$ of $M$ is the finitely generated unit $\widehat{R}_0[F]$-module given by the 
direct limit of the following direct system in the category of $\widehat{R}_0$-modules:
$M^\vee \xrightarrow{{\beta}^\vee_M} F^*(M^\vee) \xrightarrow{F^*{\beta}^\vee_M} {F^*}^2(M^\vee) \to \dots,$ where we have used the canonical isomorphism $(F^*M)^\vee \cong F^*(M^\vee)$ of \cite[Lemma 4.1]{MR1476089}. Lyubeznik-Matlis duality $\mathcal{D}$ is a contravariant functor from the category of left $\widehat{R}_0[F]$-modules which are Artinian as $\widehat{R}_0$-modules, to the category of finitely generated unit $\widehat{R}_0[F]$-modules.
\end{itemize}\end{definition}

To state our main result, we need to introduce the following notation: 

Let $L$ be a field of characteristic $0$ and let $g$ be a non-constant polynomial in $n+1$ variables $\{x_0, \dots, x_n\},$ with coefficients in $L.$ Let $(A, \frak{m})$ be the local ring of the zero-locus of $g$ at a singular point $z.$ Let us fix a resolution $X \xrightarrow{\pi} Z= \text{Spec}(A)$ of the singularity $z$ and let $Y$ be the fibre of $\pi$ at $z.$

\begin{definition}
Let $B\subset L$ be a finitely generated subring, containing $1.$ We say that $B$ is a ring of definition of $\pi$ if the coefficients of $g$ are contained in $B$ and there is a resolution of singularities of $B$-schemes $X_B \xrightarrow{\pi_B} Z_B$ whose base-change $L\otimes_{Frac(B)} \pi_B$ is isomorphic to $\pi.$ \end{definition}
For each closed point $u$ of $\text{Spec}(B),$ we let $X_u \xrightarrow{\pi_u} Z_u$ (resp. $g_u,$ resp. $Y_u$) be the fibre of $\pi_B$ (resp. $g,$ resp. $Y$) over $k(u).$ Finally, we consider the coherent cohomology groups $H^l(X_u, \mathcal{O}_{X_u})$ (resp. $H^l(Y_u, \mathcal{O}_{Y_u})$ ) as left $A[F]$-modules (resp. $k(u)[F]$-modules) for the action of the Frobenius endomorphism on the cohomology. Here is our main result:

\begin{theorem} \label{thm: main}
Suppose that $n\geq2$ and that $g$ is absolutely irreducible with an isolated singularity at the origin. Then there is a ring of definition $B \subset L$ of $\pi$ such that, for all closed points $u$ of $\text{Spec}(B):$ 

\begin{itemize}
\item[(i)] \label{item: uF-length}
The unit $F$-module length of the first local cohomology group $H_{g_u}^1(R_{k(u)})$ is 

$1 + \mathrm{ql}(H^{n-1}(X_u, \mathcal{O}_{X_u})) = 1 +  \mathrm{ql}(H^{n-1}(Y_u, \mathcal{O}_{Y_u})).$
\item[(ii)] \label{item: D-length}
The $D_{k(u)}$-module length of $H_{g_u}^1(R_{k(u)})$ is 
$$1 + \mathrm{ql}(\overline{k(u)}\otimes H^{n-1}(X_u, \mathcal{O}_{X_u})) = 1 + \dim_{\overline{k(u)}}((\overline{k(u)}\otimes H^{n-1}(Y_u, \mathcal{O}_{Y_u}))^\ast),$$ where $\overline{k(u)}$ is any algebraic closure of $k(u)$ and $(-)^\ast$ is the operation on $\overline{k(u)}[F]$-modules from Definition \ref{defi: *}.
\end{itemize}
\end{theorem}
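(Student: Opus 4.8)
The plan is to establish part (i) first and then deduce part (ii) from it. For part (i), the strategy is to work with the local cohomology module $H^1_{g_u}(R_{k(u)})$ as a finitely generated unit $F$-module and to relate it, after completion at the singular point, to the local cohomology of the singularity $Z_u$ via Lyubeznik-Matlis duality. Concretely, I would first reduce to a single prime by spreading out: choose a finitely generated ring of definition $B$ so that the resolution $\pi_B$, the hypersurface, and all the relevant cohomology modules and their Frobenius actions are defined over $B$ and behave compatibly with base change to each closed fibre; generic flatness and the constructibility of the relevant sheaves guarantee that shrinking $\operatorname{Spec}(B)$ finitely many times makes the formation of $H^l(X_u,\mathcal{O}_{X_u})$, $H^l(Y_u,\mathcal{O}_{Y_u})$, the $F$-action, and the resolution itself commute with the fibrewise specialization. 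The point is that, away from a proper closed subset of $\operatorname{Spec}(B)$, the geometry of the resolution and the singularity in characteristic $p$ mirrors the characteristic zero picture, so Hara's results apply uniformly.

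Having fixed such a $B$, the core of part (i) is the identification of the unit $F$-module structure of $H^1_{g_u}(R_{k(u)})$ in terms of the exceptional fibre. The module $H^1_{g_u}(R_{k(u)})$ is supported on the hypersurface $\{g_u=0\}$ and, locally near the isolated singular point $z_u$, its behaviour is governed by the local cohomology $H^{\dim Z_u}_{\mathfrak{m}_u}(A_u)$ of the singularity. I would pass to the completion $\widehat{A_u}$ and apply Lyubeznik-Matlis duality $\mathcal{D}$ from Definition \ref{defi:LM duality}: the Lyubeznik-Matlis dual of $H^{\dim Z_u}_{\mathfrak{m}_u}(\widehat{A_u})$ is controlled by the coherent cohomology $H^{n-1}(X_u,\mathcal{O}_{X_u})$ of the resolution, and the tight closure of $0$ in this local cohomology module — which by Hara's computation \cite{MR1646049} is precisely the submodule killed by the Frobenius-stable part, i.e. corresponds to the nilpotent-modulo-stable piece — measures exactly the failure of $H^1_{g_u}(R_{k(u)})$ to be simple as a unit $F$-module. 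The extra summand $1$ comes from the ``generic'' part of $H^1$, i.e. the copy of the constant unit $F$-module-like simple piece generated by $1/g_u$ (analogue of the intersection homology $D$-module via Blickle \cite{MR2036330}), while the remaining $\mathrm{ql}(H^{n-1}(X_u,\mathcal{O}_{X_u}))$ simple subquotients are in bijection with the simple subquotients of $(H^{n-1}/(H^{n-1})_{nil})^\ast$ as an $A_u[F]$-module. The equality $\mathrm{ql}(H^{n-1}(X_u,\mathcal{O}_{X_u})) = \mathrm{ql}(H^{n-1}(Y_u,\mathcal{O}_{Y_u}))$ follows from formal function theory / the theorem on formal functions identifying the completed stalk of $R^{n-1}\pi_*\mathcal{O}_{X_u}$ with $\varprojlim H^{n-1}(Y_u^{(m)},\mathcal{O})$ together with the fact that quasilength only sees the stable part, which is unchanged under the thickenings.

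For part (ii), the passage from the unit $F$-module length over $k(u)$ to the $D_{k(u)}$-module length is handled by Blickle's length comparison theorem \cite{MR1946409}, which says that for a finitely generated unit $F$-module the $D$-module length equals the unit $F$-module length after base change to the algebraic closure, combined with Haastert's Kashiwara equivalence \cite{MR966631} to move between the ambient affine space and the support. Base-changing to $\overline{k(u)}$ commutes with everything in sight (local cohomology, the $F$-action, and the operation $(-)^\ast$, since $\overline{k(u)}$ is a directed union of finite, hence flat, extensions), so applying part (i) over $\overline{k(u)}$ gives $1 + \mathrm{ql}(\overline{k(u)}\otimes H^{n-1}(X_u,\mathcal{O}_{X_u}))$. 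Finally, over the algebraically closed field $\overline{k(u)}$, a finite-dimensional $\overline{k(u)}[F]$-module $V$ has $\mathrm{ql}(V) = \dim_{\overline{k(u)}}(V^\ast)$: the stable part $V^\ast = \cap_l F^l V$ is the largest sub on which $F$ acts bijectively, the quotient $V/V^\ast$ is $F$-nilpotent hence of quasilength $0$, and over an algebraically closed field a bijective-Frobenius module of dimension $r$ is a direct sum of $r$ copies of the simple module $\overline{k(u)}$ with $F=\mathrm{id}$ up to a twist, so its quasilength is exactly $r$ (here I use $n\geq 2$, i.e. $\dim Z_u \geq 1$, only through the applicability of Hara's results; the linear-algebra statement is characteristic-$p$ semisimplicity of the category of ``étale'' $F$-modules, Lang's theorem).

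The main obstacle I expect is the first core step: rigorously matching the unit $F$-module $H^1_{g_u}(R_{k(u)})$, which lives on affine space, with a Lyubeznik-Matlis dual of the local cohomology of the singularity, and then invoking Hara's tight-closure computation in exactly the right form. This requires (a) a clean statement that Blickle's intersection homology $D$-module sits inside $H^1$ as the simple sub/quotient accounting for the ``$1+$'', (b) that the quotient's Lyubeznik-Matlis dual is $H^{\dim Z_u}_{\mathfrak{m}_u}(\widehat{A_u})$ with its natural $F$-structure, and (c) that Hara's description of $0^*$ in this local cohomology module translates, under $\mathcal{D}$ and the quasilength formalism, into the assertion that the number of simple unit-$F$ subquotients equals $\mathrm{ql}(H^{n-1}(X_u,\mathcal{O}_{X_u}))$. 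Each ingredient is in the literature, but assembling them with uniform control over the closed points of $\operatorname{Spec}(B)$ — so that a single $B$ works for \emph{all} closed points $u$, including the ``bad'' supersingular-type ones where the stable part drops — is where the real work lies.
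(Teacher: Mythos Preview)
Your proposal is correct and follows essentially the same route as the paper: Blickle's intersection homology module $\mathcal{L}$ as the simple sub accounting for the ``$1+$'', Lyubeznik--Matlis duality over the completion, Hara's identification $0^{*}_{H^{n}_{\mathfrak{m}}(A_u)}\cong H^{n-1}(X_u,\mathcal{O}_{X_u})$, and then Blickle's length comparison plus Haastert's Kashiwara equivalence for part~(ii). Two small points where your sketch drifts from the paper: in your item (b) the duality is reversed---it is the \emph{whole} $H^{1}_{g_u}(\widehat{R}_0)$ that equals $\mathcal{D}(H^{n}_{\widehat{\mathfrak{m}}}(\widehat{A}_0))$, while $\mathcal{L}'$ is the dual of the quotient by $0^{*}$, so the punctual piece $\mathcal{K}'$ is dual to $0^{*}$ itself; and for $\mathrm{ql}(H^{n-1}(X_u))=\mathrm{ql}(H^{n-1}(Y_u))$ the paper does not pass through thickenings but uses the simpler fact that $H^{n-1}(Y_u,\mathcal{O}_{Y_u})\cong H^{n-1}(X_u,\mathcal{O}_{X_u})/\mathfrak{m}_u H^{n-1}(X_u,\mathcal{O}_{X_u})$ (right-exactness of top-degree cohomology) together with $\mathfrak{m}M\subset M_{\mathrm{nil}}$ when $M$ is Artinian supported at $\mathfrak{m}$.
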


\begin{proof} 
By Ostrowski's Theorem, see \cite[Lemma 11]{MR0257033} for a quick proof, there is a definition ring $B'$ of $\pi$ such that, for all closed points $u$ of $\text{Spec}(B'),$ $g_u$ is absolutely irreducible. 

For every closed point $u$ of $\text{Spec}(B'),$ we will use the following notation: $(A_u, \mathfrak{m}_u)$ is the local ring of the singularity, $(R_{0}, \mathfrak{m}):= ((R_{k(u)})_{(x_0,\dots, x_n)}, (x_0,\dots, x_n)(R_{k(u)})_{(x_0,\dots, x_n)})$ and $(R_{\overline{0}}, \overline{\mathfrak{m}}):= ((R_{\overline{k(u)}})_{(x_0,\dots, x_n)},(x_0,\dots, x_n)(R_{\overline{k(u)}})_{(x_0,\dots, x_n)}).$ We denote their completion with respect to their maximal ideal by $(\widehat{A}_0, \widehat{\mathfrak{m}_0}) , (\widehat{R}_0, \widehat{\mathfrak{m}})$ and $(\widehat{R_{\overline{0}}}, \widehat{\overline{\mathfrak{m}}}),$ respectively.

We have a short exact sequence of both $D_{k(u)}$- and unit $F$-modules:
\begin{equation} \label{equation: ses}
0\to \mathcal{L} \to H_{g_u}^1(R_{k(u)}) \to \mathcal{K} \to 0
\end{equation}
where $\mathcal{L}$ is the intersection homology module $\mathcal{L}(\mathbb{A}^{n+1}_{k(u)}, \{g_u = 0\})$ of \cite{MR2036330} and $\mathcal{K}$ is supported at the origin.
Tensoring with the completion $\widehat{R}_0$ of the local ring at the origin, we get a short exact sequence: $$0\to \widehat{R}_0\otimes_{R_{k(u)}} \mathcal{L} \to \widehat{R}_0\otimes_{R_{k(u)}} H_{g_u}^1(R_{k(u)}) \to 
\widehat{R}_0\otimes_{R_{k(u)}}\mathcal{K} \to 0$$ which we can rewrite as
$0\to \mathcal{L}'\to H^1_{g_u}(\widehat{R}_0) \to\mathcal{K}'\to 0,$ where 
$\mathcal{L}'= \mathcal{L}(\frac{\widehat{R}_0}{g_u\widehat{R}_0}, \widehat{R}_0)$ and $\mathcal{K}'=  \widehat{R}_0\otimes_{R_{k(u)}}\mathcal{K}.$ Indeed  $\mathcal{L}'\cong \widehat{R}_0\otimes_{R_{k(u)}}\mathcal{L}$ by \cite[Theorem 4.6]{MR2036330} and it is well-known that local cohomology commutes with base-change by the completion.
Clearly the length of $\mathcal{K}'$ as a $D_{\widehat{R}_0}$-module (resp. unit $F$-module) equals the length of $\mathcal{K}$ as a $D_{k(u)}$-module (resp. unit $F$-module). Hence so is the case for $H_{g_u}^1(R_{k(u)})$
and $H^1_{g_u}(\widehat{R}_0),$ since $\mathcal{L}$ and $\mathcal{L}'$ are irreducible.

Let $D':= D_{\widehat{R}_0}.$ We also let $\lg_{uF}(-)$ be the unit $F$-module length and $\lg_{D'}(-)$ to be the $D'$-module length. 
The proof of (i) thus reduces to: There exists a ring of definition $B\supset B'$ of $\pi$ such that $\Spec(B)\subset \Spec(B')$ is a dense open subset and, for all closed points $u$ of $\Spec(B),$
$\lg_{uF}(\mathcal{K}')= \mathrm{ql}(H^{n-1}(X_u, \mathcal{O}_{X_u})).$ Let us prove this assertion.

We will use the notation of \cite{MR2036330}. Using Lyubeznik-Matlis duality $\mathcal{D},$ see Definition \ref{defi:LM duality}, for all closed points $u$ of $\text{Spec}(B'),$ we have $H^1_{g_u}(\widehat{R}_0) \cong \mathcal{D}(H^n_{\widehat{\mathfrak{m}}}(\widehat{A}_0))$ by \cite[Proposition 2.16]{MR2036330}. By \cite[Theorem 4.4]{MR2036330}, we also have $\mathcal{L}' \cong \mathcal{D}(\frac{H^n_{\widehat{\mathfrak{m}}}(\widehat{A}_0)}{0^\ast_{H^n_{\widehat{\mathfrak{m}}}}(\widehat{A}_0)}),$  where $0^\ast_{H^n_{\widehat{\mathfrak{m}}}(\widehat{A}_0)} \subset H^n_{\widehat{\mathfrak{m}}}(\widehat{A}_0)$ is the tight closure of zero. Since Lyubeznik-Matlis duality exchanges unit $F$-module length with quasilength by the proof of \cite[Theorems 4.5]{MR1476089}, we have $\lg_{uF}(H^1_{g_u}(\widehat{R}_0)) = 1 + \mathrm{ql}(0^\ast_{H^n_{\widehat{\mathfrak{m}}}(\widehat{A}_0)}).$ Moreover by Lemma \ref{lemm: tight closure} applied to $R = R_{0}$ and $M = A_u,$ $\mathrm{ql}(0^\ast_{H^n_{\widehat{\mathfrak{m}}}(\widehat{A}_0)})$ is equal to $\mathrm{ql}(0^\ast_{H^n_{\mathfrak{m}}(A_u)}).$

Finally, since $A$ is an isolated singularity and $n\geq2,$ it is normal. Hence by \cite[Theorem 4.7]{MR1646049}, there exists a ring of definition $B\supset B'$ of $\pi$ such that $\Spec(B)\subset \Spec(B')$ is a dense open subset and, for all closed points $u$ of $\Spec(B),$ $0^\ast_{H^n_{\mathfrak{m}}(A_u)} \cong H^{n-1}(X_u, \mathcal{O}_{X_u}),$ as $A_u[F]$-modules. But by Lemma \ref{lemm: quasilength}, $\mathrm{ql}(H^{n-1}(X_u, \mathcal{O}_{X_u})) = \mathrm{ql}(H^{n-1}(Y_u, \mathcal{O}_{Y_u}))$ as $H^{n-1}(Y_u, \mathcal{O}_{Y_u})\cong\frac{H^{n-1}(X_u, \mathcal{O}_{X_u})}{\mathfrak{m}_uH^{n-1}(X_u, \mathcal{O}_{X_u})}$ by Lemma \ref{lemm: cd}.
This concludes the proof of (i).

We now prove (ii). 
From (\ref{equation: ses}), we deduce the short exact sequence $$0\to \overline{k(u)}\otimes\mathcal{L} \to H_{g_u}^1(R_{\overline{k(u)}}) \to \overline{k(u)}\otimes\mathcal{K} \to 0$$
Therefore, tensoring with the completion $\widehat{R_{\overline{0}}}$ of $R_{\overline{0}},$ we also have the short exact sequence
$$0\to \overline{k(u)}\otimes\mathcal{L}' \to H_{g_u}^1(\widehat{R_{\overline{0}}}) \to \overline{k(u)}\otimes\mathcal{K}' \to 0,$$
with $\mathcal{L}'$ and $\mathcal{K}'$ as above.
Note that $\overline{k(u)}\otimes\mathcal{L}'= \mathcal{L}(\frac{\widehat{R_{\overline{0}}}}{g_u\widehat{R_{\overline{0}}}}, \widehat{R_{\overline{0}}})$ by \cite[Lemma 5.16]{blickle2001intersection}. 
Moreover, since the injective hull $H^{n+1}_{\widehat{\mathfrak{m}}\otimes \overline{k(u)}}(\widehat{R_{\overline{0}}})$ of $\overline{k(u)}=\frac{\widehat{R_{\overline{0}}}}{\widehat{\mathfrak{m}}\otimes \overline{k(u)}}$ is isomorphic to $H^{n+1}_{\widehat{\mathfrak{m}}}(\widehat{R}_0)\otimes \overline{k(u)},$ it is easy to see that Matlis duality commutes with the field extension $-\otimes_{k(u)}\overline{k(u)}.$
Hence Lyubeznik-Matlis duality commutes with $-\otimes_{k(u)}\overline{k(u)}.$ Thus we have that $\overline{k(u)}\otimes\mathcal{L}'\cong \overline{k(u)}\otimes\mathcal{D}(\frac{H^n_{\widehat{\mathfrak{m}}}(\widehat{A}_0)}{0^\ast_{H^n_{\widehat{\mathfrak{m}}}(\widehat{A}_0)}}) \cong \mathcal{D}(\frac{H^n_{\overline{k(u)}\otimes\mathfrak{m}}(\overline{k(u)}\otimes\widehat{A}_0)}{\overline{k(u)}\otimes 0^\ast_{H^n_{\widehat{\mathfrak{m}}}(\widehat{A}_0)}}).$ Therefore the length of the unit $F$-module $H_{g_u}^1(\widehat{R_{\overline{0}}})$ is equal to $1+ \mathrm{ql}(\overline{k(u)}\otimes 0^\ast_{H^n_{\widehat{\mathfrak{m}}}(\widehat{A}_0)})= 1+ \mathrm{ql}(\overline{k(u)}\otimes H^{n-1}(X_u, \mathcal{O}_{X_u})).$
Also, similarly as above, the unit $F$-module length of $H_{g_u}^1(R_{\overline{k(u)}})$ is equal to the length of $H_{g_u}^1(\widehat{R_{\overline{0}}}),$ as a unit $F$-module. We thus deduce that the length of the unit $F$-module $H_{g_u}^1(R_{\overline{k(u)}})$ is $1+ \mathrm{ql}(\overline{k(u)}\otimes H^{n-1}(X_u, \mathcal{O}_{X_u})).$ Moreover $\mathrm{ql}(\overline{k(u)}\otimes H^{n-1}(X_u, \mathcal{O}_{X_u}))= \mathrm{ql}(\overline{k(u)}\otimes H^{n-1}(Y_u, \mathcal{O}_{Y_u}))$ by Lemmas \ref{lemm: quasilength} and \ref{lemm: cd}, and $\mathrm{ql}(\overline{k(u)}\otimes H^{n-1}(Y_u, \mathcal{O}_{Y_u}))= \dim_{\overline{k(u)}}((\overline{k(u)}\otimes H^{n-1}(Y_u, \mathcal{O}_{Y_u}))^\ast)$ by Lemma \ref{lemm: ql^*}.

But by \cite[Theorem 1.1]{MR1946409}, the length of $H_{g_u}^1(R_{\overline{k(u)}})$ as a unit $F$-module is equal to its length as a $D_{\overline{k(u)}}$-module. Finally, we claim that the $D_{\overline{k(u)}}$-module length of $H_{g_u}^1(R_{\overline{k(u)}})$ is equal to the $D_{k(u)}$-module length of $H_{g_u}^1(R_{k(u)}).$ This implies part (ii) of the theorem. 

Let us prove this last claim. We let $\lg_{D_{\overline{k(u)}}}(-)$ (resp. $\lg_D(-)$) denote the $D_{\overline{k(u)}}$-module (resp. $D_{k(u)}$-module) length. Localising at the origin, one sees by \cite[Lemma 5.16]{blickle2001intersection} that $\overline{k(u)}\otimes\mathcal{L}$ is the intersection homology module. Thus $\lg_{D_{\overline{k(u)}}}(\overline{k(u)}\otimes\mathcal{L})=1= \lg_D(\mathcal{L}).$ Hence the claim reduces to the equality $\lg_{D_{\overline{k(u)}}}(\overline{k(u)}\otimes\mathcal{K})= \lg_D(\mathcal{K}).$ But this follows immediately from the compatibility with base-field extension of Kashiwara's equivalence, see \cite[Corollary 8.13]{MR966631} (the proof of which is well-known to be valid over an arbitrary field of positive characteristic). Indeed by Kashiwara's equivalence, we have $\lg_{D_{\overline{k(u)}}}(\overline{k(u)}\otimes\mathcal{K})= \dim_{\overline{k(u)}}(\overline{k(u)}\otimes V)= \dim_{k(u)}(V)= \lg_D(\mathcal{K}),$ for a certain finite dimensional $k(u)$-vector space $V.$ 
\end{proof}

Recall that a ring $R$ is $F$-finite if it is  of positive characteristic, Noetherian and if the Frobenius map $\Spec(R) \xrightarrow{F} \Spec(R)$ is finite.

\begin{lemma} \label{lemm: tight closure}
Let $(R, \mathfrak{m})$ be an $F$-finite regular local ring and let $M$ be a finitely generated $R$-module. Then for all $i\geq0,$
the canonical isomorphism $H^i_{\mathfrak{m}}(M) \widetilde{\to} H^i_{\widehat{\mathfrak{m}}}(\widehat{M})$ induces an isomorphism of tight closures $0^\ast_{H^i_{\mathfrak{m}}(M)}\cong0^\ast_{H^i_{\widehat{\mathfrak{m}}}(\widehat{M})},$
where $(\widehat{R},\widehat{\mathfrak{m}})$ (resp. $\widehat{M}$) is the $\mathfrak{m}$-adic completion of $(R, \mathfrak{m})$ (resp. $M$).
\end{lemma}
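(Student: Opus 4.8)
The plan is to reduce the comparison of tight closures along completion to the existence of a common test element and the faithful flatness of $\widehat{R}$ over $R$. First I would recall that for a regular ring, or more generally any reduced $F$-finite ring (and $R$ is regular here, hence a domain if we localize appropriately, but in any case reduced), the $F$-finiteness hypothesis guarantees the existence of a completely stable test element $c\in R\setminus\bigcup(\text{minimal primes})$ by the work of Hochster--Huneke on test elements in $F$-finite rings; the point of ``completely stable'' is precisely that the image of $c$ remains a test element in $\widehat{R}$ and in all localizations and completions thereof. Fix such a $c$. Then for any Artinian module $N$ with compatible Frobenius action over $R$ (here $N=H^i_{\mathfrak{m}}(M)$, which is Artinian since $M$ is finitely generated over the local ring $R$), an element $\eta\in N$ lies in $0^\ast_N$ if and only if $c\,F^e(\eta)=0$ in $F^e(R)\otimes_R N$ for all $e\gg0$, by the test-element characterization of tight closure of zero in Artinian modules.

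Next I would use that the canonical map $H^i_{\mathfrak{m}}(M)\to H^i_{\widehat{\mathfrak{m}}}(\widehat{M})$ is an isomorphism compatible with the Frobenius actions. Concretely, $\widehat{M}=\widehat{R}\otimes_R M$, local cohomology commutes with the flat base change $R\to\widehat{R}$, and the Frobenius on $H^i$ is induced functorially, so the identification is $F$-equivariant. Under this identification, $F^e$-images and the vanishing condition $c\,F^e(\eta)=0$ transport directly: the tensor products $F^e(R)\otimes_R H^i_{\mathfrak{m}}(M)$ and $F^e(\widehat{R})\otimes_{\widehat{R}} H^i_{\widehat{\mathfrak{m}}}(\widehat{M})$ are identified because $F^e(\widehat{R})=\widehat{R}\otimes_R F^e(R)$ (Frobenius commutes with completion for $F$-finite rings, using that $R^{1/p^e}\otimes_R\widehat{R}$ is the $\widehat{\mathfrak{m}}$-adic completion of $R^{1/p^e}$, which is again the completion appearing for $\widehat{R}$), and again local cohomology commutes with this flat base change. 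Since $c$ is a test element on both sides, $\eta\in 0^\ast_{H^i_{\mathfrak{m}}(M)}$ iff $c\,F^e(\eta)=0$ for $e\gg0$ iff its image lies in $0^\ast_{H^i_{\widehat{\mathfrak{m}}}(\widehat{M})}$, giving the desired isomorphism of submodules.

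The main obstacle, and the step requiring the most care, is justifying that a single element $c$ can be chosen to function as a test element simultaneously in $R$ and in $\widehat{R}$, together with the compatibility $F^e(\widehat{R})\cong\widehat{R}\otimes_R F^e(R)$ of the Frobenius pushforward with completion; this is exactly where $F$-finiteness of $R$ is essential, since without it the completion need not be $F$-finite and completely stable test elements need not exist. Everything else is a formal consequence of flat base change for local cohomology and the functoriality of the Frobenius action, so I would state the test-element input as a citation to Hochster--Huneke (or to Bruns--Herzog / the standard tight-closure references) and present the transport of the vanishing condition as a short direct verification rather than grinding through it in detail.
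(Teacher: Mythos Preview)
Your approach is essentially the same as the paper's: the paper's proof is a two-line argument that cites the standard isomorphism $H^i_{\mathfrak{m}}(M)\cong H^i_{\widehat{\mathfrak{m}}}(\widehat{M})$ and then invokes the existence of completely stable \emph{big} test elements for $R$ (citing Hochster's notes) to conclude immediately that the tight closures match. Your write-up unpacks exactly this mechanism in more detail; the only terminological refinement worth noting is that since $H^i_{\mathfrak{m}}(M)$ is Artinian but not finitely generated, one should say ``big'' test element rather than just ``test element,'' which is precisely what the paper does.
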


\begin{proof} The isomorphism $H^i_{\mathfrak{m}}(M) \widetilde{\to} H^i_{\widehat{\mathfrak{m}}}(\widehat{M})$ is well-known, see \cite[Proposition 2.15]{MR2355770}. Furthermore the existence of completely stable big test elements for $R$ (\cite[Theorem p.77]{hochster2007foundations}) immediately implies the equality of tight closures.
\end{proof}

\begin{lemma} \label{lemm: ql^*}
Let $\overline{k}$ be an algebraically closed field of positive characteristic $p$ and let $V$ be a $\overline{k}$-finite dimensional left $\overline{k}[F]$-module. Then the quasilength of $V$ is $\dim_{\overline{k}}(V^\ast),$ where $(-)^\ast$ is the operation on $\overline{k}[F]$-modules from Definition \ref{defi: *}.\end{lemma}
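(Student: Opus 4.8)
**Proof proposal for Lemma \ref{lemm: ql^*}.**

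The plan is to reduce everything to the structure theory of finite-dimensional $\overline{k}[F]$-modules, where $F$ acts $p$-linearly. First I would invoke the standard decomposition (essentially due to Dieudonné; see also the ``stable part'' discussion in \cite{MR0441962} already cited in the introduction): since $\overline{k}$ is algebraically closed, a finite-dimensional left $\overline{k}[F]$-module $V$ splits canonically as $V = V^\ast \oplus V_{nil}$, where $V_{nil} = \cup_{n\geq0}\ker(F^n\colon V\to V)$ is the part on which $F$ is nilpotent and $V^\ast = \cap_{n\geq0} F^nV$ is the part on which $F$ acts bijectively (the two subspaces stabilize because $\dim_{\overline k}V<\infty$, and they are complementary by a Fitting-type argument applied to the $p$-linear endomorphism $\beta_V\colon F^*V\to V$). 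Moreover, on the bijective part $V^\ast$, after base change to $\overline{k}$ one can choose a basis of $F$-fixed vectors, so $V^\ast \cong (\overline{k})^{\oplus \dim_{\overline k}V^\ast}$ as a $\overline{k}[F]$-module, with $F$ acting as the identity on a spanning set.

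Next I would identify the simple objects and the behaviour of $(-)^\ast$ and $(-)_{nil}$ under the operations appearing in the definition of quasilength. A finite-dimensional $\overline{k}[F]$-module $W$ with $W = W^\ast$ and $W_{nil}=0$ is simple precisely when $\dim_{\overline k}W = 1$: indeed $W\cong\overline{k}^{\oplus r}$ with trivial $F$-action, and any line is an $F$-stable submodule, so simplicity forces $r=1$; conversely the one-dimensional module $\overline{k}$ with $F=\mathrm{id}$ is visibly simple. Then for the module $V$ itself, $(V/V_{nil})^\ast \cong V^\ast$ (killing the nilpotent part does not change the bijective part, and $(-)^\ast$ is idempotent), so for a one-step chain $0\subset V$ the ``defect'' object $(\frac{V/V_{nil}}{(V/V_{nil})_{nil}})^\ast = V^\ast$ is simple iff $\dim_{\overline k}V^\ast = 1$.

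The core of the argument is then an induction on $\dim_{\overline k}V^\ast$ showing $\mathrm{ql}(V) = \dim_{\overline k}(V^\ast)$. If $V^\ast = 0$, i.e. $V = V_{nil}$, then $\mathrm{ql}(V) = 0 = \dim_{\overline k}(V^\ast)$ by definition. If $\dim_{\overline k}V^\ast = r\geq 1$, pick a one-dimensional $F$-stable subspace $W\subset V^\ast$ (possible since $F$ acts as identity on a basis of $V^\ast$); then $W$ is an $\overline{k}[F]$-submodule of $V$ with $(\frac{W}{W_{nil}})^\ast = W$ simple, and $(V/W)^\ast \cong V^\ast/W$ has dimension $r-1$, while $(V/W)_{nil}$ is finite-dimensional. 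By the induction hypothesis $\mathrm{ql}(V/W) = r-1$, and concatenating a quasimaximal chain of $V/W$ with $0\subset W$ gives a quasimaximal chain of $V$ of length $r$; since all quasimaximal chains have the same length by \cite[Theorems 4.5 and 4.6]{MR1476089}, $\mathrm{ql}(V) = r = \dim_{\overline k}(V^\ast)$. The main obstacle I anticipate is purely bookkeeping: verifying carefully that for a short exact sequence $0\to W\to V\to V/W\to 0$ of finite-dimensional $\overline{k}[F]$-modules one indeed has $(V/W)^\ast \cong V^\ast/W$ when $W\subseteq V^\ast$ is $F$-stable of the above type — this uses that $F$ is an automorphism of $V^\ast$, hence of $V^\ast/W$, so $(V/W)^\ast \supseteq V^\ast/W$, together with the reverse inclusion coming from $\dim$ count via the splitting $V/W \cong (V^\ast/W)\oplus V_{nil}$ — and checking that the quasilength is well-defined in our setting (which is guaranteed because $V=V_{nil}$ is excluded precisely when $V^\ast\neq 0$, matching the hypothesis $M_{nil}\subsetneq M$ in the definition of $\mathrm{ql}$).
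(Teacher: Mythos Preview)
Your proposal is correct and follows essentially the same route as the paper: both arguments rest on the fact that over an algebraically closed field the stable part $V^\ast$ admits a $\overline{k}$-basis of $F$-fixed vectors, from which a quasimaximal chain of length $\dim_{\overline{k}}V^\ast$ is built. The paper compresses your induction into the single observation $\mathrm{ql}(V)=\mathrm{ql}(V^\ast)$ (appealing directly to the definition of quasilength) and then declares that the lemma ``easily follows'' from the fixed-vector basis, citing \cite[Proposition 4.6]{MR1946409}; your write-up simply unpacks that last step via the Fitting-type splitting $V=V^\ast\oplus V_{nil}$ and an explicit chain construction.
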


\begin{proof} By definition of quasilength, we have $\mathrm{ql}(V)= \mathrm{ql}(V^\ast).$ Moreover, $F$ acts surjectively and thus injectively on $V^\ast.$ Hence, by \cite[Proposition 4.6]{MR1946409} for example, $V^\ast$ has a $\overline{k}$-basis of vectors fixed by $F.$ The lemma easily follows. 
\end{proof}

\begin{lemma}\label{lemm: quasilength}
Let $(A, \mathfrak{m})$ be a local Noetherian $k$-algebra with Frobenius endomorphism $F$ and let $M$ be a left $A[F]$-module. Suppose that $M$ is Artinian and Noetherian as an $A$-module. If $M$ is supported at the maximal ideal $\mathfrak{m},$ then $M$ and $\frac{M}{\mathfrak{m}M}$ have the same quasilength.
\end{lemma}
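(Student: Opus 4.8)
The plan is to exploit the fact that an Artinian module supported at $\mathfrak{m}$ is, after completion, a module over the complete local ring, and that over such a ring quasilength is computed from a quasimaximal chain whose successive quotients are simple $A[F]$-modules up to nilpotents. First I would reduce to the complete case: replacing $A$ by $\widehat{A}$ and $M$ by $\widehat{M}$ changes neither the quasilength (the $A[F]$-submodule structure, the operations $(-)_{nil}$ and $(-)^\ast$, and simplicity are all preserved, since $M$ being supported at $\mathfrak{m}$ means $M=\widehat{M}$ already as an $A$-module once $M$ is Artinian and $\mathfrak{m}$-torsion — indeed such an $M$ is already complete). Likewise $\frac{M}{\mathfrak{m}M}$ is unchanged. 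So without loss of generality $A$ is complete local.

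Next, the key observation is that because $M$ is both Artinian and Noetherian over $A$ and supported at $\mathfrak{m}$, it has finite length as an $A$-module, hence $\mathfrak{m}^N M = 0$ for some $N$. The plan is to run an induction on the $A$-module length of $M$, or equivalently on the smallest such $N$. For $N=1$ we have $M=\frac{M}{\mathfrak{m}M}$ and there is nothing to prove. For the inductive step, consider the $A[F]$-submodule $\mathfrak{m}M \subset M$: it is an $A[F]$-submodule because $F(\mathfrak{m}M) \subseteq \mathfrak{m}^{[p]}M \subseteq \mathfrak{m}M$. We then have a short exact sequence of $A[F]$-modules $0 \to \mathfrak{m}M \to M \to \frac{M}{\mathfrak{m}M} \to 0$, and quasilength is additive on short exact sequences of Artinian $A[F]$-modules by \cite[Theorem 4.7]{MR1476089} (or the analogue used implicitly in the excerpt). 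Thus $\mathrm{ql}(M) = \mathrm{ql}(\mathfrak{m}M) + \mathrm{ql}(\frac{M}{\mathfrak{m}M})$. By induction $\mathrm{ql}(\mathfrak{m}M) = \mathrm{ql}(\frac{\mathfrak{m}M}{\mathfrak{m}^2 M})$, so it suffices to show that $\mathrm{ql}$ of any finite-dimensional $\frac{A}{\mathfrak{m}}[F]$-module that arises this way is governed only by its dimension over the residue field — but that is not quite what we want; rather, we want to see directly that $\mathrm{ql}(\mathfrak{m}M)=0$, i.e. that $\mathfrak{m}M$ is nilpotent under $F$ in the strong sense that $(\frac{\mathfrak{m}M}{(\mathfrak{m}M)_{nil}})^\ast = 0$.

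This last point is the heart of the matter and the step I expect to be the main obstacle. The claim is that for $M$ of finite $A$-length supported at $\mathfrak{m}$, we have $M^\ast = \cap_n F^n M$ is annihilated by $\mathfrak{m}$, equivalently $F^n M \subseteq \mathfrak{m}M$ for $n \gg 0$ unless already $M = \frac{M}{\mathfrak m M}$. To see this, note $F^n(M)$ generates an $A$-submodule $F^nM$, and since $\mathfrak{m}^{[p^n]} \subseteq \mathfrak{m}^{p^n}$, the image $F^n(\mathfrak m M)$ lands in $\mathfrak m^{p^n} M$; as $\mathfrak m^N M = 0$, for $p^n \geq N$ we get $F^n(\mathfrak m M) = 0$, hence $F^n$ factors through $\frac{M}{\mathfrak m M}$, so $F^nM = A\cdot F^n(M) = A\cdot F^n(\overline{M})$ where $\overline M = \frac{M}{\mathfrak m M}$; iterating and using that on the residue field $F$ is just the $p$-power Frobenius of $\frac A{\mathfrak m}$, one controls $M^\ast$ and concludes $\mathrm{ql}(M) = \mathrm{ql}(\overline M)$. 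The delicate bookkeeping will be to phrase this via the operations $(-)_{nil}$ and $(-)^\ast$ so that it meshes exactly with the definition of quasimaximal chains; I would handle it by checking that $(M)_{nil} \supseteq \mathfrak m M$ after passing to a high enough Frobenius power, reducing the whole statement to the residue field where $\mathrm{ql}$ and $\dim$ agree by Lemma \ref{lemm: ql^*}.
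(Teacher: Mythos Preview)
Your proposal is correct, and the key observation you eventually reach---that $\mathfrak{m}M \subseteq M_{\mathrm{nil}}$ because $\mathfrak{m}^l M = 0$ for some $l$ and hence $F^r(\mathfrak{m}M) \subseteq \mathfrak{m}^{p^r}M = 0$ for $p^r \geq l$---is exactly the paper's proof in its entirety. Once $\mathfrak{m}M \subseteq M_{\mathrm{nil}}$, one concludes immediately since $\mathrm{ql}(\mathfrak{m}M)=0$ and quasilength is additive.

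Everything else in your proposal is scaffolding that can be removed. The reduction to the complete case is unnecessary: the argument uses only that $M$ has finite $A$-length, which already follows from being Artinian, Noetherian, and supported at $\mathfrak{m}$. The induction on $N$ (or on the $A$-length) is likewise superfluous, since the single observation $\mathfrak{m}M \subseteq M_{\mathrm{nil}}$ handles all of $\mathfrak{m}M$ at once, not just one layer. The detour through $M^\ast$ and the attempt to show $F^nM$ lands in $\mathfrak{m}M$ is also not needed; the relevant containment is $\mathfrak{m}M \subseteq M_{\mathrm{nil}}$, not anything about $M^\ast$. Finally, Lemma~\ref{lemm: ql^*} is not required here (it concerns algebraically closed residue fields and is used elsewhere in the paper). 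In short: your last sentence is the whole proof.
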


\begin{proof} Let us show that $\mathfrak{m}M\subset M_{nil}.$ This implies the lemma since $\mathrm{ql}(M_{nil})=0.$

Since $M$ is supported at $\mathfrak{m}$ and is Noetherian, $\mathfrak{m}^lM=0$ for some $l\geq0.$ Thus $F^r(\mathfrak{m}M) \subset \mathfrak{m}^{p^r}M=0$ for some $r\geq0.$ Hence $\mathfrak{m}M\subset M_{nil},$ as claimed.
\end{proof}

\begin{lemma} \label{lemm: cd}
Let $(A, \frak{m})$ be a Noetherian local ring and let $X \xrightarrow{\pi} Z = \Spec(A)$ be a projective morphism of special fibre $Y.$ Suppose that the fibres of $\pi$ are of dimension at most $d.$ Then for all quasi-coherent sheaves $\mathcal{F}$ on $X,$ the canonical morphism $H^d(X, \mathcal{F}) \to H^d(Y, \frac{\mathcal{F}}{\frak{m}\mathcal{F}})$ induces an isomorphism $\frac{H^d(X, \mathcal{F})}{\frak{m}H^d(X, \mathcal{F})} \to H^d(Y, \frac{\mathcal{F}}{\frak{m}\mathcal{F}}).$
\end{lemma}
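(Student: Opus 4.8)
The plan is to prove a top-degree cohomology-and-base-change statement which, unlike its lower-degree counterparts, needs no flatness hypothesis: the only geometric input is the vanishing of cohomology above the fibre dimension. First I would note that the morphism $H^d(X,\mathcal{F})\to H^d(Y,\mathcal{F}/\mathfrak{m}\mathcal{F})$ in question does factor through $H^d(X,\mathcal{F})/\mathfrak{m}H^d(X,\mathcal{F})$, since $H^d(Y,\mathcal{F}/\mathfrak{m}\mathcal{F})$ is annihilated by $\mathfrak{m}$. As the closed immersion $i\colon Y\hookrightarrow X$ is affine and $i_*$ is exact, one has a canonical identification $H^d(Y,\mathcal{F}/\mathfrak{m}\mathcal{F})\cong H^d(X,\mathcal{F}/\mathfrak{m}\mathcal{F})$, so it suffices to show that the natural map $H^d(X,\mathcal{F})/\mathfrak{m}H^d(X,\mathcal{F})\to H^d(X,\mathcal{F}/\mathfrak{m}\mathcal{F})$ induced by $\mathcal{F}\twoheadrightarrow\mathcal{F}/\mathfrak{m}\mathcal{F}$ is an isomorphism.

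The crucial ingredient is that $H^i(X,\mathcal{G})=0$ for every quasi-coherent $\mathcal{O}_X$-module $\mathcal{G}$ and every $i>d$. Since $Z=\Spec(A)$ is affine and $R^i\pi_*\mathcal{G}$ is quasi-coherent, the Leray spectral sequence collapses and gives $H^i(X,\mathcal{G})=\Gamma(Z,R^i\pi_*\mathcal{G})$, so it is enough to prove $R^i\pi_*\mathcal{G}=0$ for $i>d$. As $X$ is Noetherian and $\pi$ quasi-compact and quasi-separated, $\mathcal{G}$ is the filtered colimit of its coherent subsheaves and $R^i\pi_*$ commutes with filtered colimits, so I may assume $\mathcal{G}$ coherent. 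Then $R^i\pi_*\mathcal{G}$ is a coherent $\mathcal{O}_Z$-module, hence a finitely generated $A$-module, and by the theorem on formal functions its $\mathfrak{m}$-adic completion is $\varprojlim_n H^i(X_n,\mathcal{G}_n)$, where $X_n$ is the closed subscheme of $X$ cut out by $\mathfrak{m}^{n+1}\mathcal{O}_X$. Each $X_n$ has underlying topological space $|Y|$, which is Noetherian of dimension at most $d$, so Grothendieck's vanishing theorem gives $H^i(X_n,\mathcal{G}_n)=0$ for $i>d$; thus the completion of $R^i\pi_*\mathcal{G}$ vanishes, and since $A$ is local its completion is faithfully flat over $A$, forcing $R^i\pi_*\mathcal{G}=0$. (Alternatively one may simply invoke the vanishing of $R^i\pi_*$ above the maximal fibre dimension from EGA III.)

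With this vanishing in hand the remainder is the classical bookkeeping. Applying $H^\bullet(X,-)$ to the exact sequence $0\to\mathfrak{m}\mathcal{F}\to\mathcal{F}\to\mathcal{F}/\mathfrak{m}\mathcal{F}\to 0$ and using $H^{d+1}(X,\mathfrak{m}\mathcal{F})=0$ shows $H^d(X,\mathcal{F})\to H^d(X,\mathcal{F}/\mathfrak{m}\mathcal{F})$ is surjective, hence so is the induced map out of $H^d(X,\mathcal{F})/\mathfrak{m}H^d(X,\mathcal{F})$. For injectivity I must identify the image of $H^d(X,\mathfrak{m}\mathcal{F})\to H^d(X,\mathcal{F})$ with $\mathfrak{m}H^d(X,\mathcal{F})$: writing $\mathfrak{m}=(a_1,\dots,a_r)$, the map $\mathcal{F}^{\oplus r}\xrightarrow{(a_1,\dots,a_r)}\mathcal{F}$ has image $\mathfrak{m}\mathcal{F}$, and applying $H^\bullet(X,-)$ to $0\to\mathcal{N}\to\mathcal{F}^{\oplus r}\to\mathfrak{m}\mathcal{F}\to 0$ together with $H^{d+1}(X,\mathcal{N})=0$ yields a surjection $H^d(X,\mathcal{F})^{\oplus r}\twoheadrightarrow H^d(X,\mathfrak{m}\mathcal{F})$; composing with $H^d(X,\mathfrak{m}\mathcal{F})\to H^d(X,\mathcal{F})$ shows the image of the latter is $\sum_i a_iH^d(X,\mathcal{F})=\mathfrak{m}H^d(X,\mathcal{F})$. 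Substituting this into the long exact sequence gives the claimed isomorphism. The only genuinely substantive point is the top-degree vanishing of $R^i\pi_*$ for arbitrary quasi-coherent sheaves; the reduction to the coherent case and the passage from vanishing of the completion to vanishing of the module itself (which is exactly where localness of $A$ enters) are the details requiring care.
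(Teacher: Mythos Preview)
Your argument is correct and follows essentially the same route as the paper's proof: establish the vanishing $H^i(X,\mathcal{G})=0$ for $i>d$ and all quasi-coherent $\mathcal{G}$ via formal functions and a reduction to the coherent case, then run the long exact sequences attached to $0\to\mathfrak{m}\mathcal{F}\to\mathcal{F}\to\mathcal{F}/\mathfrak{m}\mathcal{F}\to 0$ and to a presentation $\mathcal{F}^{\oplus r}\twoheadrightarrow\mathfrak{m}\mathcal{F}$ by generators of $\mathfrak{m}$. The only differences are expository: you spell out the identification $H^d(Y,\mathcal{F}/\mathfrak{m}\mathcal{F})\cong H^d(X,\mathcal{F}/\mathfrak{m}\mathcal{F})$ and the faithful-flatness step in the formal-functions argument, whereas the paper simply cites Hartshorne for the vanishing of $R^i\pi_*$ above the fibre dimension.
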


\begin{proof} We first claim that for all quasi-coherent sheaves $\mathcal{F}$ on $X$ and for all integers $i\geq d+1, H^i(X, \mathcal{F})=0.$ This is well-known. 
Indeed it follows from the theorem on formal functions that $R^i\pi_*(\mathcal{G})=0$ for all $i\geq d+1,$ and for all coherent sheaves $\mathcal{G}$ on $X$ (\cite[Corollary III 11.2]{MR0463157}). Thus, since $Z$ is affine, we have that $H^i(X, \mathcal{G})=0,$ for all $i\geq d+1,$ and for all coherent sheaves $\mathcal{G}$ on $X.$ Since a quasi-coherent sheaf is the union of its coherent subsheaves, the claim then follows from the commutation of cohomology with direct limits (\cite[Proposition III 2.9]{MR0463157}).

Let $\{r_1, \dots, r_N\}$ be a set of generators of the maximal ideal $\frak{m}.$ We have the following short exact sequences of quasi-coherent sheaves on $X:$ $0\to\frak{m}\mathcal{F} \to \mathcal{F} \to \frac{ \mathcal{F}}{\frak{m}\mathcal{F}} \to 0$ and $0 \to J \to \oplus^{l=N}_{l=1} \mathcal{F} \xrightarrow{\phi} \frak{m}\mathcal{F} \to 0,$ where $\phi(f_1, \dots, f_N)= r_1f_1 + \dots + r_Nf_N$ and $J$ is the kernel of $\phi.$ Moreover by the claim, we have that $H^{d+1}(X, \frak{m}\mathcal{F}) = H^{d+1}(X, J)=0.$ Thus in the associated long exact sequences in Zariski cohomology, the morphisms $H^d(X, \mathcal{F}) \to H^d(X, \frac{ \mathcal{F}}{\frak{m}\mathcal{F}})$ and $\oplus^{l=N}_{l=1} H^d(X, \mathcal{F}) \to H^d(X, \frak{m}\mathcal{F})$ are surjective. The latter implies that the image of the morphism $H^d(X, \frak{m}\mathcal{F}) \to H^d(X, \mathcal{F})$ from the long exact sequence is $\frak{m}H^d(X, \mathcal{F}).$ This concludes the proof of the lemma.
\end{proof}

If $g$ is homogeneous, Theorem \ref{thm: main} may be rephrased without mentioning a resolution of the singularity. Let $Y$ be the hypersurface defined by $g$ in $\mathbb{P}^n.$ We first fix the notation.  

\begin{definition}
Let $B\subset L$ be a finitely generated subring, containing $1.$ We say that $B$ is a ring of definition of $Y$ if the coefficients of $g$ are contained in $B$ and there is a smooth projective hypersurface $Y_B$ of $\mathbb{P}^n_B$ whose base-change $L\otimes_{Frac(B)} Y_B$ is isomorphic to $Y.$ \end{definition}

Given such an hypersurface $Y_B,$ for each closed point $u$ of $\text{Spec}(B),$ we let $Y_u$ be the fibre of $Y_B$ over $k(u).$ Here is the result: 

\begin{corollary} \label{cor: homogeneous}
Under the same hypotheses as in Theorem \ref{thm: main}, assume that $g$ is homogeneous and let $Y$ be the hypersurface defined by $g$ in $\mathbb{P}^n.$ Then there is a ring of definition $B \subset L$ of $Y$ such that, for all closed points $u$ of $\text{Spec}(B):$ 
\begin{itemize}
\item[(i)] 
The unit $F$-module length of $H_{g_u}^1(R_{k(u)})$ is $1 + \mathrm{ql}(H^{n-1}(Y_u, \mathcal{O}_{Y_u})).$
\item[(ii)] 
The $D_{k(u)}$-module length of $H_{g_u}^1(R_{k(u)})$ is $1 + \dim_{\overline{k(u)}}(\overline{k(u)}\otimes_{k(u)}H^{n-1}(Y_u, \mathcal{O}_{Y_u}))^\ast,$ where $\overline{k(u)}$ is any algebraic closure of $k(u)$ and $(-)^\ast$ is the operation on $\overline{k(u)}[F]$-modules from Definition \ref{defi: *}.
\end{itemize}
\end{corollary}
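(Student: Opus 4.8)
The plan is to deduce Corollary \ref{cor: homogeneous} from Theorem \ref{thm: main} by applying the latter to a particularly transparent resolution of the cone singularity — the blow-up of its vertex — whose exceptional fibre turns out to be the projective hypersurface $Y$ itself, so that no reference to a general resolution survives in the conclusion.

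First I would note that, for homogeneous $g$, the affine hypersurface $\{g=0\}\subset\mathbb{A}^{n+1}$ has an isolated singularity at the origin if and only if $Y=V(g)\subset\mathbb{P}^n$ is smooth: the singular locus of $\{g=0\}$ is the common vanishing locus of $g$ and its partial derivatives, which is a cone, so it reduces to the origin exactly when its projectivisation — the singular locus of $Y$ — is empty. Under the hypotheses of Theorem \ref{thm: main}, therefore, $Y$ is a smooth absolutely irreducible projective hypersurface of dimension $n-1\geq 1$, so that speaking of a ring of definition of $Y$ is legitimate. I would then run Theorem \ref{thm: main} with the resolution $\pi\colon X\to Z=\Spec(A)$ taken to be the blow-up of $Z$ at the vertex $z$, i.e.\ at the maximal ideal $\mathfrak{m}$. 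Since $L$ has characteristic $0$ and $Y$ is smooth, $X$ is smooth — it is the total space of the tautological line bundle $\mathcal{O}_Y(-1)$ over $Y$ — and $\pi$, being proper, birational, and an isomorphism away from $z$, is a resolution. Its fibre over $z$ is the exceptional divisor $E=\mathrm{Proj}(\mathrm{gr}_{\mathfrak{m}}A)$; since $A$ is already graded with $\mathfrak{m}$ its irrelevant ideal and generated in degree $1$, one has $\mathfrak{m}^{j}/\mathfrak{m}^{j+1}\cong A_{j}$ for all $j$, hence $\mathrm{gr}_{\mathfrak{m}}A\cong A$ as graded rings and $E=\mathrm{Proj}(A)=V(g)=Y$ with its natural scheme structure. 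Thus, for this choice of $\pi$, the module $H^{n-1}(Y_u,\mathcal{O}_{Y_u})$ appearing in Theorem \ref{thm: main} is exactly the coherent cohomology of the projective hypersurface fibre.

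Next I would apply Theorem \ref{thm: main} to this $\pi$, obtaining a ring of definition $B_0$ of $\pi$ over which both conclusions hold at every closed point. The same associated-graded computation carried out over $B_0$ — it still holds that $\mathrm{gr}_{\mathfrak{m}_{B_0}}(A_{B_0})\cong A_{B_0}=B_0[x_0,\dots,x_n]/(g)$ as graded $B_0$-algebras — identifies the exceptional divisor of $\pi_{B_0}$ with $V(g)\subset\mathbb{P}^n_{B_0}$, and by generic smoothness (smoothness of $V(g)$ descends along the field extension $L/\mathrm{Frac}(B_0)$ and then extends over a dense open of the base) this is a smooth projective hypersurface over some dense open $\Spec(B)\subseteq\Spec(B_0)$. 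Then $B$ is at once a ring of definition of $Y$ in the sense of the corollary, and for every closed point $u$ of $\Spec(B)$ the fibre $Y_u$ of Theorem \ref{thm: main} coincides with the fibre $Y_u$ of $Y_B$ in the corollary. Substituting this identification into Theorem \ref{thm: main}(i) yields part (i) of the corollary verbatim, and into Theorem \ref{thm: main}(ii) yields part (ii), whose right-hand side is already written in the form $1+\dim_{\overline{k(u)}}\big((\overline{k(u)}\otimes_{k(u)}H^{n-1}(Y_u,\mathcal{O}_{Y_u}))^{\ast}\big)$ — one may also pass freely between this dimension and the quasilength via Lemma \ref{lemm: ql^*}.

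I expect the only real work to lie in the last step's spreading-out bookkeeping: checking that the ring of definition of the blow-up resolution produced by Theorem \ref{thm: main} can be shrunk to a ring of definition of $Y$, and that the two meanings of $Y_u$ agree fibre by fibre. The rest is the explicit geometry of the cone and its blow-up together with a direct appeal to Theorem \ref{thm: main}; in particular no new $F$-module argument is needed, since the identity $\mathrm{ql}(H^{n-1}(X_u,\mathcal{O}_{X_u}))=\mathrm{ql}(H^{n-1}(Y_u,\mathcal{O}_{Y_u}))$ is already part of that theorem.
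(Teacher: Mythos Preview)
Your proposal is correct and follows exactly the paper's approach: apply Theorem \ref{thm: main} to the blow-up of the origin, whose exceptional fibre is $Y$. The paper dispatches this in two sentences (calling the blow-up description ``well-known'' and the deduction ``immediate''), whereas you have usefully spelled out the identification of the exceptional divisor via the associated graded and the spreading-out needed to match the two notions of ring of definition.
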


\begin{proof} It is well-known that in this case the blow-up of the origin is a resolution $\pi'$ of the singularity and that the fibre at the origin is isomorphic to $Y.$ The result then immediately follows from Theorem \ref{thm: main} applied to $\pi'.$
\end{proof}
Thus the $D_k$-module length of the first local cohomology is closely related to ordinarity. Here is a simple example:

\begin{expl} \label{expl: elliptic curve}
Let $g$ be a rational cubic in three variables which is the equation of an elliptic curve $E$ in $\mathbb{P}^2_{\mathbb{Q}}.$ Then, for almost all primes $p,$ the $D_{R_{\mathbb{F}_p}}$-module length of $H^1_{g_p}(R_{\mathbb{F}_p})$ is $2$ if $E_p$ is ordinary and $1$ if $E_p$ is supersingular, where $E_p$ (resp. $g_p$) is the reduction of $E$ (resp. $g$) modulo $p.$
\end{expl}

\section{Comparison with Characteristic Zero}

Here, given a complex polynomial $g,$ we consider a holonomic $D_{\mathbb{C}}$-module $N_g$ whose length compares well to the $D_{k(u)}$-module length of $H_{g_u}^1(R_{k(u)}).$ 

\begin{definition} Let $g\in R_{\mathbb{C}}$ be a complex polynomial. Then $N_g$ is the left $D_{\mathbb{C}}$-submodule of the first local cohomology $D_{\mathbb{C}}$-module $H^1_g(R_{\mathbb{C}})$ generated by the class of $\frac{1}{g}.$
\end{definition}

The following is proved in \cite[Theorem 1.1]{bitoun2016d}.

\begin{theorem} \label{thm: char 0}
Let $g$ be a non-constant homogeneous complex polynomial in $n+1$ variables with an isolated singularity at the origin. Then, using the notation of Corollary \ref{cor: homogeneous}, the $D_{\mathbb{C}}$-module length of $N_g$ is $1+ \dim_\mathbb{C}H^{n-1}(Y,\mathcal{O}_Y).$

\end{theorem}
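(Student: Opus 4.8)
The goal is to compute the $D_{\mathbb{C}}$-module length of $N_g = D_{\mathbb{C}}\frac{1}{g} \subset H^1_g(R_{\mathbb{C}})$ for $g$ homogeneous with an isolated singularity at the origin. My strategy would be to mirror, in the characteristic-zero setting, the structure of the proof of Theorem \ref{thm: main}: exhibit a short exact sequence
\[
0 \to \mathcal{L}_{\mathbb{C}} \to N_g \to \mathcal{K}_{\mathbb{C}} \to 0,
\]
where $\mathcal{L}_{\mathbb{C}}$ is the (simple, holonomic) intersection-cohomology $D_{\mathbb{C}}$-module associated to the hypersurface $\{g=0\}$, and $\mathcal{K}_{\mathbb{C}}$ is a $D_{\mathbb{C}}$-module supported at the origin, hence by Kashiwara's equivalence a finite-dimensional $\mathbb{C}$-vector space whose dimension is its length. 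Thus $\lg_{D_{\mathbb{C}}}(N_g) = 1 + \dim_{\mathbb{C}}(\mathcal{K}_{\mathbb{C}})$, and everything reduces to identifying $\mathcal{K}_{\mathbb{C}}$ with $H^{n-1}(Y,\mathcal{O}_Y)$.

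First I would establish that $\mathcal{L}_{\mathbb{C}}$ embeds in $N_g$. Since $N_g$ is a nonzero $D_{\mathbb{C}}$-submodule of $H^1_g(R_{\mathbb{C}})$ and the latter contains $\mathcal{L}_{\mathbb{C}}$ as its unique simple submodule on the generic (smooth) locus of $\{g=0\}$, I would argue that $N_g$ is not supported at the origin — because $\frac{1}{g}$ restricts to a nonzero section away from the singular point, indeed generating the rank-one local system there — so $N_g \cap \mathcal{L}_{\mathbb{C}} \neq 0$, and by simplicity of $\mathcal{L}_{\mathbb{C}}$ this forces $\mathcal{L}_{\mathbb{C}} \subset N_g$. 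The quotient $\mathcal{K}_{\mathbb{C}} = N_g/\mathcal{L}_{\mathbb{C}}$ is then a subquotient of $H^1_g(R_{\mathbb{C}})/\mathcal{L}_{\mathbb{C}}$, which is supported at the origin; so $\mathcal{K}_{\mathbb{C}}$ is supported at the origin and Kashiwara's equivalence applies.

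The heart of the argument — and the step I expect to be the main obstacle — is the computation $\dim_{\mathbb{C}}(\mathcal{K}_{\mathbb{C}}) = \dim_{\mathbb{C}} H^{n-1}(Y,\mathcal{O}_Y)$, where $Y = \{g=0\} \subset \mathbb{P}^n$ is the smooth projective hypersurface appearing as the exceptional fibre of the blow-up of the origin. Here I would exploit homogeneity: $g$ defines a $\mathbb{G}_m$-action and an affine cone structure, so $H^1_g(R_{\mathbb{C}})$ carries an Euler grading, and $N_g = D_{\mathbb{C}}\frac{1}{g}$ is the submodule generated in a single graded degree. One route is to pass to the blow-up $X \to \mathbb{A}^{n+1}$, where $X$ is the total space of $\mathcal{O}_Y(-1)$, reduce the computation of $\mathcal{K}_{\mathbb{C}}$ to coherent cohomology of $X$ along $Y$ via a de Rham / comparison argument, and then use $H^{n-1}(X,\mathcal{O}_X) \cong \bigoplus_{m \geq 0} H^{n-1}(Y,\mathcal{O}_Y(m))$ together with Kodaira-type vanishing (valid since $\mathcal{O}_Y(1)$ is ample and $Y$ has the cohomology of a smooth hypersurface) to collapse the sum to the single term $H^{n-1}(Y,\mathcal{O}_Y)$. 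Alternatively — and this is probably cleaner — I would deduce the result directly from Theorem \ref{thm: main}(ii) by a spreading-out argument: choose a ring of definition $B$ so that Theorem \ref{thm: main} applies, and observe that $N_g$ spreads out to $N_{g_u} = D_{k(u)}\frac{1}{g_u} = H^1_{g_u}(R_{k(u)})$ (using \cite[Theorem 1.1]{MR2155224}), so that $\lg_{D_{\mathbb{C}}}(N_g)$ is bounded below by $\liminf_u \lg_{D_{k(u)}}(H^1_{g_u}(R_{k(u)}))$; but $\dim_{\mathbb{C}} H^{n-1}(Y,\mathcal{O}_Y) = \dim_{\overline{k(u)}} H^{n-1}(Y_u,\mathcal{O}_{Y_u})$ for almost all $u$ (cohomology and Hodge numbers of smooth hypersurfaces are constant in the family), and $\dim((\overline{k(u)}\otimes H^{n-1}(Y_u,\mathcal{O}_{Y_u}))^\ast) \to \dim H^{n-1}(Y_u,\mathcal{O}_{Y_u})$ along the ordinary locus, which has density one — giving $\lg_{D_{\mathbb{C}}}(N_g) \geq 1 + \dim_{\mathbb{C}} H^{n-1}(Y,\mathcal{O}_Y)$, and a matching upper bound would come from the fact that $\mathcal{K}_{\mathbb{C}}$ injects into the characteristic-zero local cohomology quotient, whose graded pieces are controlled by the $H^{n-1}(Y,\mathcal{O}_Y(m))$. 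The delicate point in either route is the vanishing/collapsing that isolates the single cohomology group; I would handle it via ampleness of $\mathcal{O}_Y(1)$ and the concentration of the relevant local cohomology in the appropriate Euler degree.
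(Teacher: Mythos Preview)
The paper does not prove this theorem itself: it is quoted as \cite[Theorem 1.1]{bitoun2016d}, so there is no in-paper argument to compare your proposal against. I can, however, comment on the proposal on its own terms.

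Your structural setup is sound. The inclusion $\mathcal{L}_{\mathbb{C}} \subset N_g$ holds for the reason you give, the quotient $\mathcal{K}_{\mathbb{C}}$ is supported at the origin, and Kashiwara's equivalence reduces everything to the identification $\dim_{\mathbb{C}}\mathcal{K}_{\mathbb{C}} = \dim_{\mathbb{C}} H^{n-1}(Y,\mathcal{O}_Y)$. This is also how the argument in \cite{bitoun2016d} begins.

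Your Route B, however, has a genuine gap. You assert that the weakly ordinary locus ``has density one,'' but this is precisely the open weak ordinarity conjecture of Musta\c{t}\u{a}--Srinivas; the present paper explicitly flags it as conjectural in Remark~\ref{rmk: comparison} and in the discussion following Question~\ref{q: generalisation}. You cannot use it as an input. In addition, the semicontinuity inequality $\lg_{D_{\mathbb{C}}}(N_g) \geq \liminf_u \lg_{D_{k(u)}}(H^1_{g_u}(R_{k(u)}))$ is not a standard fact and would itself require proof; there is no general lower-semicontinuity of $D$-module length under reduction modulo~$p$, and the spreading-out of $N_g$ versus $H^1_{g_u}$ is delicate precisely because $N_g \subsetneq H^1_g(R_{\mathbb{C}})$ in general while equality holds in characteristic~$p$. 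Note too that the paper's logic runs in the opposite direction: Theorem~\ref{thm: char 0} is taken as input, and combined with Theorem~\ref{thm: main} to obtain the conditional comparison of Remark~\ref{rmk: comparison}.

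Route A is in the right spirit but, as written, is a placeholder rather than a proof: ``a de Rham / comparison argument'' hides the actual identification. The proof in \cite{bitoun2016d} is purely characteristic-zero and proceeds through the pole-order (Hodge) filtration on $H^1_g(R_{\mathbb{C}})$ together with Griffiths' residue description of the primitive cohomology of the smooth hypersurface $Y$, which singles out $H^{n-1}(Y,\mathcal{O}_Y)$ directly; no positive-characteristic input and no ordinarity hypothesis are needed.
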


\begin{rmk} \label{rmk: comparison}
There is a ring of definition $B \subset \mathbb{C}$ of $Y$ such that, for all closed points $u$ of $\text{Spec}(B),$
$\dim_\mathbb{C}H^{n-1}(Y,\mathcal{O}_Y) =  \dim_{\overline{k(u)}}(\overline{k(u)}\otimes_{k(u)}H^{n-1}(Y_u, \mathcal{O}_{Y_u})).$ Hence by Corollary \ref{cor: homogeneous} and Theorem \ref{thm: char 0}, there is a ring of definition $B'\supset B$ of $Y$ such that for all closed points $u$ of $\text{Spec}(B'),$ if the Frobenius $F$ acts bijectively on $\overline{k(u)}\otimes_{k(u)}H^{n-1}(Y_u, \mathcal{O}_{Y_u}),$ then the length of $N_g$ is equal to the $D_{k(u)}$-module length of $H_{g_u}^1(R_{k(u)}).$ Indeed in that case, $\dim_\mathbb{C}H^{n-1}(Y,\mathcal{O}_Y) =  \dim_{\overline{k(u)}}(\overline{k(u)}\otimes_{k(u)}H^{n-1}(Y_u, \mathcal{O}_{Y_u}))^\ast.$ This property of the Frobenius is called weak ordinarity and is expected to hold for a dense set of closed points of $\Spec(B'),$ see \cite[Conjecture 1.1]{MR2863367}.
\end{rmk}

We would like to put forward the following questions:

\begin{q} \label{q: generalisation}
Let $g$ be a non-constant complex polynomial in $n+1$ variables. Is there a unitary finitely generated subring $B\subset \mathbb{C}$ containing the coefficients of $g$ such that:

\begin{enumerate}
\item \label{item: inequality}
For all closed points $u\in \Spec(B),$ $\lg_{D_{k(u)}}(H_{g_u}^1(R_{k(u)}))\leq \lg_{D_{\mathbb{C}}}(N_g)$? 
\item \label{item: ordinary}
There is a dense set of closed points of $\Spec(B)$ for which $\lg_{D_{k(u)}}(H_{g_u}^1(R_{k(u)}))= \lg_{D_{\mathbb{C}}}(N_g)$? 
\end{enumerate}
\end{q}

As explained in Remark \ref{rmk: comparison}, for $g$ homogeneous with an isolated singularity and $n\geq2,$ the first part of Question \ref{q: generalisation} has a positive answer. In the same case, the second part has a positive answer as well, if the weak ordinarity conjecture of \cite[Conjecture 1.1]{MR2863367} is satisfied by $Y.$ We finally note that by Theorem \ref{thm: main}, \cite[Conjecture 1.4]{bitoun2016d} (which is equivalent to \cite[Conjecture 3.8]{MR3283930}) implies a positive answer to the first part of Question \ref{q: generalisation}, for $g$ (not necessarily homogeneous) with an isolated singularity at the origin and $n\geq2.$

\section{Funding}

This work was supported by the Engineering and Physical Sciences Research Council [EP/L005190/1].

It is my pleasure to thank Manuel Blickle and Gennady Lyubeznik for interesting correspondence regarding the length of the positive characteristic first local cohomology module in the homogeneous case. I am also very grateful to Karl Schwede for explaining to me a proof of Lemma \ref{lemm: tight closure}. Finally, many thanks go to Johannes Nicaise and Travis Schedler for communicating to me proofs of Lemma \ref{lemm: cd}, as well as to the referee for pointing out that a proof was needed in the first place and for suggesting a simplification in the final argument.   

\bibliographystyle{plain}
\bibliography{bibfilex}

\end{document}